\theoremstyle{plain}
\newtheorem{theorem}{Theorem}[section]
\newtheorem{lemma}[theorem]{Lemma}
\newtheorem{proposition}[theorem]{Proposition}
\theoremstyle{definition}
\theoremstyle{remark}
\newcommand{\E}{\mathbf{E}}
\renewcommand{\P}{\mathbf{P}}
\newcommand{\p}[1]{\P\left\{{#1}\right\}}
\newcommand{\transpose}{^{\mathsf{T}}}
\newcommand{\tv}[2]{\operatorname{TV}\left(#1, #2\right)}
\newcommand{\TV}{\operatorname{TV}}
\newcommand{\kl}[2]{\operatorname{KL}\left(#1 \parallel #2\right)}
\newcommand{\tvn}[4]{\tv{\ncal(#1,#2)}{\ncal(#3,#4)}}
\newcommand{\R}{\mathbb{R}}
\newcommand{\ncal}{\mathcal{N}}
\newcommand{\tr}{\operatorname{tr}}
\newcommand{\range}{\operatorname{range}}
\newcommand{\rank}{\operatorname{rank}}
\def\red#1{#1}
\author{Luc Devroye\thanks{Supported by NSERC Grant A3456.}\\McGill University
	\and Abbas Mehrabian\thanks{Supported by an IVADO-Apog\'ee-CFREF Postdoctoral Fellowship. Email:
		\href{mailto:abbas.mehrabian@gmail.com}{abbas.mehrabian@gmail.com}.}\\McGill University
	\and Tommy Reddad\thanks{Supported by  NSERC PGS D Scholarship 396164433.}\\McGill University}
\title{The total variation distance between \\ high-dimensional Gaussians\\
with the same mean}
\begin{document}
	
	\maketitle
	
	\begin{abstract}
		Given two high-dimensional Gaussians with the same mean,
		we prove a lower and an upper bound for their total variation distance, which are within a constant factor of one another.\footnote{In an earlier version, tight bounds were claimed for the total variation distance between two general Gaussians. But the proof of the upper bound was incorrect, and we removed the flawed bound from the paper. Later, Arbas, Ashtiani, and Liaw (\cite[Theorem 1.8]{tight_bounds}) proved tight bounds for the total-variation distance between two general Gaussians, solving the original problem.}
	\end{abstract}
	
	\section{Introduction}
	The Gaussian (or normal) distribution is perhaps the most important distribution in probability theory due to the central limit theorem.
	For a positive integer $d$, a vector $\mu \in \R^d$, and a positive definite matrix $\Sigma$, the Gaussian distribution with mean $\mu$ and covariance matrix $\Sigma$ is a probability distribution over $\R^d$, denoted by $\ncal(\mu,\Sigma)$, with density 
	\[
	\det(2\pi \Sigma)^{-1/2} \exp \big(- (x-\mu)\transpose \Sigma^{-1} (x-\mu)/2\big) \qquad \forall x\in\R^d.
	\]
	We denote by $N(\mu,\Sigma)$ a random variable with this distribution. Note that if $X\sim \ncal(\mu,\Sigma)$ then $\E X = \mu$ and $\E X X\transpose = \Sigma$.
	
	If the covariance matrix is positive semi-definite but not positive definite, the Gaussian distribution 
	is singular on $\R^d$ but 
	has a density with respect to  a Lebesgue measure on an affine subspace: let
	$r$ be the rank of $\Sigma$,
	and let 
	$\range (\Sigma)$
	denote the range (also known as the image or the column space) of $\Sigma$.
	Let $\Pi$ be a $d\times r$ matrix whose columns form an orthonormal basis for $\range (\Sigma)$. Then the matrix $\widehat{\Sigma}\coloneqq \Pi\transpose \Sigma \Pi$ has full rank $r$, and $\ncal(\mu,\Sigma)$ has density given by
	\[
	\det(2\pi \widehat{\Sigma})^{-1/2} \exp \big(- (x-\mu)\transpose \Pi \widehat{\Sigma}^{-1} \Pi\transpose (x-\mu)/2\big)
	\]
	with respect to the $r$-dimensional Lebesgue measure on $\mu +\range (\Sigma)$. The density is zero outside this affine subspace. For general background on high-dimensional Gaussian distributions (also called multivariate normal distributions), see~\cite{Tong,Vershynin}.
	
	Given two Gaussian distributions, our goal is to understand how different they are.
	Our measure of similarity is the {\em total variation distance}, which, for any two distributions $P$ and $Q$ over $\R^d$, is defined as 
	\[ \tv{P}{Q} \coloneqq \sup_{A\subseteq \R^d} |P(A)-Q(A)|.\]
	If $P$ and $Q$ have densities $p$ and $q$, then it is easy to verify that the set $A\coloneqq \{x : p(x)>q(x)\}$ attains the supremum here, and this observation leads to the identity
	\begin{equation}\label{tvl1}
	\tv{P}{Q} = \frac12 \int_{\R^d} |p(x) - q(x)| \, dx, 
	\end{equation}
	that is, the total variation distance is half the $L^1$ distance.
	In the following, we will sometimes write $\tv{X}{Y}$ for $\tv{P}{Q}$, where $X$ and $Y$ are random variables distributed as $P$ and $Q$, respectively. Observe that $\tv{P}{Q}$ is a metric and is always between 0 and 1. For a survey on  measures of distance between distributions and  inequalities between them, see~\cite{Verdu}.
	
	We have seen that the total variation distance can be written as an integral or as a supremum,
	but in general there is no known closed form for it.
	In this note, given two Gaussians with the same mean, we give closed-form lower and upper bounds for their total variation distance,
	which are within a constant factor of one another.
	If the Gaussians have different means, we give only a lower bound and leave a tight characterization as an open problem.\footnote{This problem has been solved; see \cite[Theorem 1.8]{tight_bounds}.}
	
	\smallskip
	\noindent \textbf{Open Problem.}
	Find closed-form lower and upper bounds for the total variation distance between two high-dimensional Gaussians that are within a constant factor of one another.
	\smallskip
		
	Note that if 
	$\mu_1 + \range(\Sigma_1)
	\neq \mu_2 + \range(\Sigma_2)$,
	in particular if $\rank(\Sigma_1)\neq\rank(\Sigma_2)$,
	then we have $
	\tvn{\mu_1}{\Sigma_1}{\mu_2}{\Sigma_2}=1$, 
	since the intersection of the supports have zero Lebesgue measure.
	Another trivial case is when $\mu_1=\mu_2$ and $\Sigma_1=\Sigma_2$, in which case the total variation distance is zero.
	We will not explicitly treat these two cases.
	
	Our first main result concerns the same-mean case.
	We have not tried to optimize the constants in our results.
	
	\begin{theorem}[Total variation distance between Gaussians with the same mean]
		\label{thm:meanzero}
		Let $\mu\in \R^d$, $\Sigma_1$ and $\Sigma_2$ be positive definite $d\times d$ matrices, and  $\lambda_1,\dots,\lambda_d$ denote the eigenvalues of $\Sigma_1^{-1}\Sigma_2-I_d$.
		Then,
		\[
		\frac{1}{100}
		\leq
		\frac{\tv{\ncal(\mu,\Sigma_1)}{\ncal(\mu,\Sigma_2)}}{\min \left\{1, \sqrt{\displaystyle\sum_{i=1}^{d} \lambda_i^2} \right\}}
		\leq
		\frac32 .
		\]
		If $\Sigma_1$ and $\Sigma_2$ are positive semi-definite,
		$\range(\Sigma_1)=\range(\Sigma_2)$,
		and $r = \rank(\Sigma_1) = \rank(\Sigma_2)$, then let 
		$\Pi$ be a $d\times r$ matrix that has the same range as $\Sigma_1$ and $\Sigma_2$ and let $\rho_1,\dots,\rho_r$ denote the eigenvalues of $(\Pi\transpose \Sigma_1 \Pi)^{-1}(\Pi\transpose \Sigma_2 \Pi)-I_{r}$. Then, we have
		\[
	\frac{1}{100}
		\leq
		\frac{\tv{\ncal(\mu,\Sigma_1)}{\ncal(\mu,\Sigma_2)}}{\min \left\{1, \sqrt{ \displaystyle\sum_{i=1}^{r} \rho_i^2 }\right\}}
		\leq
		\frac32 .
		\]
	\end{theorem}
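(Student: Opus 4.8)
The plan is to reduce everything to a diagonal, zero-mean, full-rank comparison and then sandwich the total variation distance between two quantities built from the Bhattacharyya coefficient (Hellinger affinity). Since total variation distance is invariant under invertible affine maps, applying $x\mapsto \Sigma_1^{-1/2}(x-\mu)$ reduces the full-rank statement to comparing $\ncal(0,I_d)$ with $\ncal(0,M)$, where $M\coloneqq \Sigma_1^{-1/2}\Sigma_2\Sigma_1^{-1/2}$ is symmetric positive definite and similar to $\Sigma_1^{-1}\Sigma_2$, so $M-I_d$ has the same eigenvalues $\lambda_1,\dots,\lambda_d$. A further orthogonal change of coordinates diagonalizes $M$ without disturbing $I_d$, so it suffices to bound $\tv{\ncal(0,I_d)}{\ncal(0,D)}$ with $D=\operatorname{diag}(1+\lambda_1,\dots,1+\lambda_d)$ and each $1+\lambda_i>0$. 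The positive semidefinite statement reduces to this one in dimension $r$: both laws are supported on $\mu+\range(\Pi)$, and the bijection $x\mapsto \Pi\transpose(x-\mu)$ carries them to $\ncal(0,\Pi\transpose\Sigma_1\Pi)$ and $\ncal(0,\Pi\transpose\Sigma_2\Pi)$ on $\R^r$ while preserving total variation distance.

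In the reduced problem $P=\ncal(0,I_d)$ and $Q=\ncal(0,D)$ are products over coordinates, so the Bhattacharyya coefficient $\rho\coloneqq\int\sqrt{pq}$ factorizes as $\rho=\prod_i\rho_i$, and a one-dimensional computation gives $\rho_i=(1-t_i)^{1/4}$ with $t_i\coloneqq(\lambda_i/(2+\lambda_i))^2$. The standard sandwich $1-\rho\le \tv{P}{Q}\le\sqrt{1-\rho^2}$ (the upper half being Cauchy--Schwarz applied to $\tfrac12\int|\sqrt p-\sqrt q|\,|\sqrt p+\sqrt q|$) drives both bounds. For the upper bound I would use $\rho^2=\prod_i(1-t_i)^{1/2}\ge\prod_i(1-t_i)\ge 1-\sum_i t_i$ together with the elementary $t_i\le\lambda_i^2$ (valid since $\lambda_i>-1$) to get $\tv{P}{Q}\le\sqrt{\sum_i\lambda_i^2}$; combined with the trivial bound $\tv{P}{Q}\le1$ this yields the stated upper bound, with room to spare.

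The lower bound is the substantive part: when $\sum_i\lambda_i^2$ is small the Hellinger bound $\tv{P}{Q}\ge1-\rho$ is of the wrong order, since $1-\rho\asymp\sum_i\lambda_i^2$ whereas the truth is $\tv{P}{Q}\asymp\sqrt{\sum_i\lambda_i^2}$ (it is the fluctuations, not the mean, of the log-likelihood ratio that matter). I would therefore split into three cases governed by a constant $c_0\in(0,1/3)$ and by $S\coloneqq\sum_i\lambda_i^2$. If some $|\lambda_j|\ge c_0$, the data-processing inequality bounds $\tv{P}{Q}$ below by the one-dimensional distance $\tv{\ncal(0,1)}{\ncal(0,1+\lambda_j)}$, which an explicit test event shows exceeds a fixed positive constant, arranged to be at least $1/100$. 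If all $|\lambda_i|<c_0$ but $S>1$, then $\sum_i t_i\ge\tfrac1{16}\sum_i\min\{1,\lambda_i^2\}\ge\tfrac1{16}\min\{1,S\}$ is bounded below, so $1-\rho\ge 1-e^{-\frac14\sum_i t_i}$ is a constant exceeding $1/100$. The remaining case, all $|\lambda_i|<c_0$ and $S\le1$, is where the real work lies: writing $W\coloneqq\mathrm{d}Q/\mathrm{d}P$, I would use $\tv{P}{Q}=\tfrac12\,\E_P|W-1|$ and the interpolation inequality $\E_P|W-1|\ge(\E_P(W-1)^2)^{3/2}/(\E_P(W-1)^4)^{1/2}$. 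Both moments have exact product forms, $\E_P(W-1)^2=\prod_i(1-\lambda_i^2)^{-1/2}-1\ge S/2$, while $\E_P(W-1)^4$ expands through $\E_P W^k=\prod_i(1+\lambda_i)^{(1-k)/2}(1-(k-1)\lambda_i)^{-1/2}$, which are finite precisely because $\lambda_i<1/3$.

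The main obstacle is the bound $\E_P(W-1)^4\le C\,S^2$. In the combination $\E_P W^4-4\,\E_P W^3+6\,\E_P W^2-3$ the constant term, the term linear in $S$, and even the $\sum_i\lambda_i^3$ contribution all cancel, so extracting the genuine $O(S^2)$ behaviour requires tracking these cancellations and then using $\sum_i\lambda_i^4\le(\sum_i\lambda_i^2)^2=S^2$ to absorb the surviving quartic terms. Granting this, the interpolation inequality gives $\tv{P}{Q}\ge c\sqrt S$ for an absolute constant $c$, and since $S\le1$ here this is exactly $\tfrac1{100}\min\{1,\sqrt S\}$ up to the value of $c$. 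Choosing $c_0$ (hence the thresholds separating the three cases) so that all three constants clear $1/100$ completes the lower bound; the generous constant $1/100$ in the statement is precisely what provides the slack needed to fit the non-asymptotic small-$S$ estimate together with the case boundaries.
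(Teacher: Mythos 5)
Your reduction to the diagonal zero-mean problem and your treatment of the semi-definite case match the paper exactly, and your upper bound is correct but takes a genuinely different route: the paper gets the factor $\tfrac32$ from Pinsker's inequality applied to the KL-divergence, with a separate trivial case when some $\lambda_i<-2/3$, whereas your Bhattacharyya computation $\rho_i=(1-t_i)^{1/4}$, $t_i=\bigl(\lambda_i/(2+\lambda_i)\bigr)^2\le\lambda_i^2$, gives $\tv{P}{Q}\le\sqrt{\sum_i\lambda_i^2}$ in one stroke, with no case split and a better constant. Your lower-bound skeleton also parallels the paper's three cases: your Case 1 (some $|\lambda_j|\ge c_0$, projection to one coordinate plus an explicit interval test) is the paper's Lemma 3.2 argument, and your Case 2 (the Hellinger lower bound $\TV\ge 1-\prod_i(1-t_i)^{1/4}\ge 1-e^{-\frac14\sum_i t_i}$ when $S>1$) is correct and rather clean.

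The genuine gap is exactly where you flagged it, and it is the heart of the theorem: the estimate $\E_P(W-1)^4\le C\,S^2$ is asserted (``granting this'') but never proved, and without it the interpolation inequality $\E|W-1|\ge(\E(W-1)^2)^{3/2}/(\E(W-1)^4)^{1/2}$ yields nothing. The claim is true, and the cancellations you identify do occur: writing $\E_P(W-1)^4=e^{C}-4e^{B}+6e^{A}-3$ with $A=\sum_i\log\E W_i^2$, $B=\sum_i\log\E W_i^3$, $C=\sum_i\log\E W_i^4$, the constant, $O(S)$, and $\sum_i\lambda_i^3$ terms all vanish and the leading behaviour is $3\sum_i\lambda_i^4+\tfrac34 S^2+\cdots$. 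But the difficulty is that each of the four terms is individually of constant size (as large as $e^{3}$ when $S$ is near $1$), so the bound rests on a near-perfect cancellation among them: any lossy estimate applied to one term alone destroys it, and a rigorous non-asymptotic version requires joint bookkeeping of the Taylor remainders of three exponentials of sums of logarithms. That bookkeeping is the entire technical content of the theorem in your approach, and the proposal defers it. It is worth seeing how the paper sidesteps precisely this: it applies the same H\"older interpolation not to $W-1$ but to the linearized statistic $X=\sum_i\lambda_i(1-g_i^2)/2$, whose moments are elementary polynomial Gaussian moments ($\E X^2=S/2$ and $\E X^4\le\tfrac{15}{4}S^2$, by direct expansion using independence), and it controls the discrepancy between $|1-W|$ and $|X|$ via the triangle inequality, the pointwise bound $|1-e^x|\ge|x|-x^2/2$, and the exact identity $\E e^{X}=e^{bS}$ for some $b\in[0,1]$. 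So to complete your route you must either carry out the remainder analysis for the quartic moment of $W-1$, or adopt this linearization, at which point your Case 3 becomes the paper's Lemma 3.1.
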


The paper~\cite{ulyanov} proves a bound similar to Theorem~\ref{thm:meanzero} for Gaussian distributions in a general Hilbert space: if $\Sigma_1$ and $\Sigma_2$ are positive definite matrices, $\Sigma_1^{-1}\Sigma_2-I$ has eigenvalues $\lambda_1,\cdots$, and 
$\sqrt{\sum \lambda_i^2} \leq 1/50$, then~\cite[Corollary~2]{ulyanov} gives
\[\frac{1}{100}
\leq
\frac{\tv{\ncal(\mu,\Sigma_1)}{\ncal(\mu,\Sigma_2)}}{ \sqrt{\sum \lambda_i^2}}
\leq
2 .\]
This result has the advantage that it covers infinite-dimensional spaces as well, but it holds only when 
${\sum \lambda_i^2}$ is smaller than a threshold.

One can express the quantities $\sum \lambda_i^2$ and $\sum \rho_i^2$ in Theorem~\ref{thm:meanzero} in terms of Frobenius norms of appropriate matrices.
For the first case, i.e., when $\Sigma_1,\Sigma_2$ are positive definite, we have
\begin{equation}
	\sum_{i=1}^{d} \lambda_i^2 = \tr \left( \left( \Sigma_1^{-1/2} \Sigma_2 \Sigma_1^{-1/2} - I_d \right)^2 \right) = \|\Sigma_1^{-1/2} \Sigma_2 \Sigma_1^{-1/2} - I_d \|_F^2.\label{trf}
\end{equation}
To see this, first note that 
$\Sigma_1^{-1/2} \Sigma_2 \Sigma_1^{-1/2}$
have the same spectrum as
$\Sigma_1^{-1} \Sigma_2$, because a vector $v$ is an eigenvector for $\Sigma_1^{-1} \Sigma_2$ with eigenvalue $\alpha$ if and only if $\Sigma_1^{1/2}v$ is an eigenvector for
$\Sigma_1^{-1/2} \Sigma_2 \Sigma_1^{-1/2}$
with eigenvalue $\alpha$.
Thus, the eigenvalues of $\left(\Sigma_1^{-1/2} \Sigma_2 \Sigma_1^{-1/2} - I_d\right)^2$ are 
$\lambda_1^2,\dots,\lambda_d^2$, proving the first equality in~\eqref{trf}.
The second equality follows by noting that the matrix $\Sigma_1^{-1/2} \Sigma_2 \Sigma_1^{-1/2} - I_d$ is symmetric.
The second case, i.e., when $\Sigma_1,\Sigma_2$ are positive semi-definite, can be handled similarly.



For the case where the means are different, we prove the following lower bound.
	
	\begin{theorem}[Total variation distance between  Gaussians with different means]\label{thm:main}
		Suppose $d>1$, let $\mu_1\neq\mu_2\in \R^d$ and let $\Sigma_1,\Sigma_2$ be positive definite $d\times d$ matrices.
		Let $v\coloneqq \mu_1-\mu_2$ and let $\Pi$ be a $d\times d-1$ matrix whose columns form a basis for the subspace orthogonal to $v$.
		Let $\rho_1,\dots,\rho_{d-1}$ denote the eigenvalues of $ (\Pi\transpose \Sigma_1 \Pi)^{-1}
		\Pi\transpose \Sigma_2 \Pi - I_{d-1}$.
		Define the function
		\[tv(\mu_1,\Sigma_1,\mu_2,\Sigma_2) \coloneqq 
		\max\left\{ \frac{|v\transpose (\Sigma_1-\Sigma_2)v|}{v\transpose \Sigma_1 v},
		{\frac{{v\transpose v}}{\sqrt{v\transpose \Sigma_1 v}}},\sqrt{\displaystyle\sum_{i=1}^{d-1} \rho_i^2}
		\right\}
		.\]
		Then, we have
		\[
		\frac{\min\{1,tv(\mu_1,\Sigma_1,\mu_2,\Sigma_2)\}}{200} \leq
		{\tvn{\mu_1}{\Sigma_1}{\mu_2}{\Sigma_2}}.
		\]
	\end{theorem}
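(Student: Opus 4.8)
The plan is to reduce to two lower-dimensional comparisons — a one-dimensional problem along $v$ and a same-mean problem on the orthogonal complement $v^\perp$ — dispatch each with tools already in hand, and recombine. I will lean on two soft properties of total variation throughout: it is a metric (so the triangle inequality is available), and it cannot increase under a deterministic map, in particular under an orthogonal projection (the data-processing inequality). Since an orthogonal change of coordinates preserves both $\tv{\cdot}{\cdot}$ and Euclidean orthogonality, I first rotate so that $v = \|v\|e_1$. Then the marginal along $e_1$ is a univariate pair $\ncal(\cdot, v\transpose \Sigma_i v/\|v\|^2)$ whose means are separated by $\|v\|$, and the quantities $|v\transpose(\Sigma_1-\Sigma_2)v|/(v\transpose\Sigma_1 v)$ and $v\transpose v/\sqrt{v\transpose\Sigma_1 v}$ are exactly the relative variance gap and the standardized mean gap of this one-dimensional pair. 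Because $\tv{\cdot}{\cdot}\le 1$, the upper bound is only nontrivial when $tv(\mu_1,\Sigma_1,\mu_2,\Sigma_2)<1$, and the $\min\{1,\cdot\}$ truncation will let me treat both regimes uniformly.

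For the lower bound I project. Projecting onto $e_1$ leaves two univariate Gaussians whose total variation is, by the one-dimensional case, comparable to $\min\{1,\cdot\}$ of the maximum of the relative-variance-gap and standardized-mean-gap terms. Projecting onto $v^\perp$ leaves two Gaussians with a \emph{common} mean, because $\Pi\transpose(\mu_1-\mu_2)=\Pi\transpose v = 0$, and covariances $\Pi\transpose\Sigma_1\Pi$ and $\Pi\transpose\Sigma_2\Pi$; their total variation is bounded below by Theorem~\ref{thm:meanzero} in terms of $\min\{1,\sqrt{\sum\rho_i^2}\}$. Since projection does not increase total variation, $\tvn{\mu_1}{\Sigma_1}{\mu_2}{\Sigma_2}$ is at least the larger of these two, and the identity $\max\{\min\{1,a\},\min\{1,b\}\}=\min\{1,\max\{a,b\}\}$ collapses this to $\gtrsim \min\{1,tv(\mu_1,\Sigma_1,\mu_2,\Sigma_2)\}$, giving the constant $1/200$ (the worse of the one-dimensional constant and the $1/100$ of Theorem~\ref{thm:meanzero}).

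For the upper bound the triangle inequality is the natural device, but the naive interpolation through $\ncal(\mu_2,\Sigma_1)$, charging its two legs to the full Mahalanobis distance $\sqrt{v\transpose\Sigma_1^{-1}v}$ and the full Frobenius quantity $\|\Sigma_1^{-1/2}\Sigma_2\Sigma_1^{-1/2}-I_d\|_F$, is too lossy: each of these can dwarf $tv$, since they feel the cross-covariance between the $e_1$-direction and $v^\perp$ that the three displayed quantities do not see. Instead I would factor each law as its marginal along $e_1$ times its conditional law on $v^\perp$ and apply the chain-rule bound $\tv{P}{Q}\le \tv{P_1}{Q_1}+\e{\tv{P_{\perp\mid x_1}}{Q_{\perp\mid x_1}}}$, the expectation taken under the first marginal. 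The leading term is the one-dimensional total variation already estimated. In the conditional term the two covariances are the Schur complements of the $e_1$-blocks of $\Sigma_1,\Sigma_2$, while the conditional means differ by a vector affine in $x_1$; splitting this term once more by the triangle inequality isolates a same-covariance mean-shift piece and a same-mean covariance piece, the latter handled directly by Theorem~\ref{thm:meanzero}.

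The main obstacle is precisely this coupling between the $v$-direction and $v^\perp$: it is what defeats the clean whitening-plus-triangle-inequality bound and forces the conditioning step. The delicate remaining work is to show that $\e{\tv{P_{\perp\mid x_1}}{Q_{\perp\mid x_1}}}$ — an expectation of a conditional total variation whose mean drifts linearly in the conditioning variable — is absorbed into the three terms defining $tv$ once the Gaussian weight is integrated out. I expect essentially all of the quantitative effort, and the source of the final constant $9/2$, to live in this estimate.
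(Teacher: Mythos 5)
Your lower-bound argument is complete and coincides with the paper's: project onto the line spanned by $v$ and onto $v^\perp$, apply Theorem~\ref{thm:onedimensional} to the first marginal and the same-mean Theorem~\ref{thm:meanzero} to the second, and combine via $\max\{\min\{1,a\},\min\{1,b\}\}=\min\{1,\max\{a,b\}\}$; that half stands.

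The gap is the upper bound: you set up the chain rule $\tv{P}{Q}\le\tv{P_1}{Q_1}+\e{\tv{P_{\perp\mid x_1}}{Q_{\perp\mid x_1}}}$ and defer all quantitative content to showing the conditional term is absorbed by $tv(\mu_1,\Sigma_1,\mu_2,\Sigma_2)$. That estimate cannot be carried out, because the upper bound of the theorem is false, for exactly the reason (cross-covariance between the $v$ direction and $v^\perp$) that you flag as the main obstacle. Take $d=2$, $\mu_1=(\delta/2,0)\transpose$, $\mu_2=(-\delta/2,0)\transpose$, $\Sigma_1=I_2$, and
\[
\Sigma_2=\begin{pmatrix}1&1/2\\[2pt]1/2&1\end{pmatrix}.
\]
Then $v=(\delta,0)\transpose$, $v\transpose(\Sigma_1-\Sigma_2)v=0$, $v\transpose v/\sqrt{v\transpose\Sigma_1 v}=\delta$, and $\Pi=e_2$ gives $\Pi\transpose\Sigma_1\Pi=\Pi\transpose\Sigma_2\Pi=1$, hence $\rho_1=0$ and $\min\{1,tv\}=\delta$. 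On the other hand, projecting onto $(e_1-e_2)/\sqrt2$, an eigenvector of $\Sigma_2$ with eigenvalue $1/2$, and applying Lemma~\ref{lem:onedimconstant} gives $\tvn{0}{\Sigma_1}{0}{\Sigma_2}>0.01$, so by the triangle inequality
\[
\tvn{\mu_1}{\Sigma_1}{\mu_2}{\Sigma_2}\;\ge\;\tvn{\mu_1}{\Sigma_1}{\mu_1}{\Sigma_2}-\tvn{\mu_1}{\Sigma_2}{\mu_2}{\Sigma_2}\;>\;0.01-\delta,
\]
since the subtracted term equals $\p{|N(0,1)|\le\delta/\sqrt3}\le\delta$. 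Letting $\delta\to0$, the ratio $\tvn{\mu_1}{\Sigma_1}{\mu_2}{\Sigma_2}/\min\{1,tv\}$ exceeds every constant (already $\delta=0.001$ gives a ratio above $9$), so no finite constant can replace $9/2$. In your conditional language: under $\ncal(\mu_2,\Sigma_2)$ the conditional mean of $x_2$ given $x_1$ drifts like $x_1/2$, under $\ncal(\mu_1,\Sigma_1)$ it does not drift at all, and this order-one discrepancy is invisible to all three terms of $tv$.

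You should also know that the paper's own proof fails at precisely this point, so your caution identified a real flaw rather than a mere technical nuisance. The paper derives $\tv{X}{Y}\le\tv{f_1(X)}{f_1(Y)}+\tv{f_2(X)}{f_2(Y)}$ by taking an optimal coupling $(X_1,Y_1)$ of the $f_1$-marginals, an independent optimal coupling $(X_2,Y_2)$ of the $f_2$-marginals, and asserting that $(u+X_1v+X_2,\,u+Y_1v+Y_2)$ is a coupling of $(X,Y)$. But $u+X_1v+X_2\sim X$ requires $(X_1,X_2)$ to have the \emph{joint} law of $(f_1(X),f_2(X))$, whereas the construction produces the product of the marginals; these agree only when $f_1(X)$ and $f_2(X)$ are independent, i.e.\ when $P\Sigma_1 v=0$ (so that $v$ is an eigenvector of $\Sigma_1$), and likewise for $Y$. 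In the example above $\tv{f_1(X)}{f_1(Y)}=O(\delta)$ and $\tv{f_2(X)}{f_2(Y)}=0$ while $\tv{X}{Y}>0.01-\delta$, so even the paper's intermediate inequality is violated. In short: your lower bound is correct and matches the paper, your upper-bound program cannot be completed as stated, and any correct version of the theorem must add to $tv$ a quantity sensing the cross terms $\Pi\transpose(\Sigma_1-\Sigma_2)v$.
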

	
	Note that the positive definiteness of the covariance matrices can be assumed without loss of generality: if $\mu_1+\range(\Sigma_1)=\mu_2+\range(\Sigma_1)\neq \R^d$, then one can work in this affine subspace instead.
	
	Along the way of proving this theorem, we also give  bounds for the one-dimensional case.

	\begin{theorem}[Total variation distance between one-dimensional Gaussians]
		\label{thm:onedimensional}
		In the one-dimensional case, $d=1$, we have
		\[
		\frac{1}{200} \min \left\{1, \max \left\{ \frac{|\sigma_1^2-\sigma_2^2|}{\sigma_1^2} , \frac{40 |\mu_1-\mu_2|}{\sigma_1} \right\} \right\}
		\leq
		\tv{\ncal(\mu_1,\sigma_1^2)}{\ncal(\mu_2,\sigma_2^2)}
		\leq \frac{3|\sigma_1^2-\sigma_2^2|}{2\sigma_1^2}+ \frac{|\mu_1-\mu_2|}{2\sigma_1} .
		\]
	\end{theorem}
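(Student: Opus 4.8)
The plan is to obtain the upper bound from the triangle inequality and the lower bound by reducing to two one-parameter estimates, one controlling the mean gap and one the variance gap; essentially all the work is in the lower bound.

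For the upper bound I would use that $\TV$ is a metric and interpolate through $\ncal(\mu_2,\sigma_1^2)$, so that $\tv{\ncal(\mu_1,\sigma_1^2)}{\ncal(\mu_2,\sigma_2^2)}\le \tv{\ncal(\mu_1,\sigma_1^2)}{\ncal(\mu_2,\sigma_1^2)}+\tv{\ncal(\mu_2,\sigma_1^2)}{\ncal(\mu_2,\sigma_2^2)}$. The first summand is a pure shift of the mean at common variance $\sigma_1^2$; its two densities cross exactly at the midpoint $(\mu_1+\mu_2)/2$, so by~\eqref{tvl1} it equals $\p{|N(0,1)|\le |\mu_1-\mu_2|/(2\sigma_1)}$, which is at most $|\mu_1-\mu_2|/(\sqrt{2\pi}\,\sigma_1)<|\mu_1-\mu_2|/(2\sigma_1)$ because the standard normal density never exceeds $1/\sqrt{2\pi}$. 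The second summand is a pure change of variance at common mean $\mu_2$, so the one-dimensional instance of Theorem~\ref{thm:meanzero}, with single eigenvalue $(\sigma_2^2-\sigma_1^2)/\sigma_1^2$, bounds it by $\tfrac32\min\{1,|\sigma_1^2-\sigma_2^2|/\sigma_1^2\}\le 3|\sigma_1^2-\sigma_2^2|/(2\sigma_1^2)$. Summing the two yields the claimed upper bound exactly.

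For the lower bound I would first use invariance of $\TV$ under the affine map $x\mapsto (x-\mu_1)/\sigma_1$ to assume $\mu_1=0$ and $\sigma_1=1$, writing $m=|\mu_2|$ and $s=\sigma_2$. The elementary identity $\min\{1,\max\{A,B\}\}=\max\{\min\{1,A\},\min\{1,B\}\}$, applied with $A=|1-s^2|$ and $B=40m$, reduces the claim to the two separate bounds $\tv{\ncal(0,1)}{\ncal(m,s^2)}\ge \tfrac1{200}\min\{1,|1-s^2|\}$ and $\tv{\ncal(0,1)}{\ncal(m,s^2)}\ge \tfrac1{200}\min\{1,40m\}$, after which one takes the larger. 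For the variance bound with $s$ bounded away from $0$ and $\infty$ and $m$ moderate, I would interpolate through $\ncal(0,s^2)$ and use $\tv{\ncal(0,1)}{\ncal(m,s^2)}\ge \tv{\ncal(0,1)}{\ncal(0,s^2)}-\tv{\ncal(0,s^2)}{\ncal(m,s^2)}$, lower-bounding the first term by the same-mean Theorem~\ref{thm:meanzero} (lower constant $1/100$) and upper-bounding the mean-shift correction as in the upper-bound step; a large mean gap is handled separately, since a half-line test already forces $\TV$ near $1$. For the mean bound the essential point is that $\TV$ grows \emph{linearly} in $m$ near the diagonal, a behaviour the squared Hellinger distance cannot certify since it is only quadratic there; instead I would use the Kolmogorov bound $\TV\ge\sup_t|F_1(t)-F_2(t)|$, valid because every half-line is an admissible test set, and evaluate the gap at $t=0$ to get $\tv{\ncal(0,1)}{\ncal(m,s^2)}\ge \Phi(m/s)-\tfrac12$, which is of order $m$ whenever $s$ stays bounded. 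In the remaining extreme regimes both targets fall below a fixed constant, which I would supply directly: for large $s$ the density of $\ncal(0,1)$ strictly dominates that of $\ncal(m,s^2)$ on a fixed neighbourhood of $0$, and for small $s$ a short interval around $\mu_2$ carries constant mass under $\ncal(m,s^2)$ but negligible mass under $\ncal(0,1)$; either exhibits a set on which the two laws differ by a universal constant exceeding $1/200$.

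The main obstacle is precisely this lower bound, and within it the bookkeeping of constants across the regimes of the variance ratio $s$. The factor $40$ on the mean term and the overall constant $1/200$ are tuned so that the constant-order guarantee available when $s$ is extreme and the linear guarantees available in the bulk match up seamlessly; consequently the cases must be cut at the right thresholds and the test sets chosen carefully so that every inequality closes with the stated numbers.
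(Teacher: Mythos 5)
Your upper bound is correct, and it takes a genuinely different route from the paper: the paper gets it in one step from Pinsker's inequality and the closed-form KL divergence (Proposition~\ref{upperkl}), while you interpolate through $\ncal(\mu_2,\sigma_1^2)$ via the triangle inequality, using the exact mean-shift formula and the $d=1$ case of Theorem~\ref{thm:meanzero}; both close with the stated constants. Your mean lower bound also works and is essentially the paper's proof of~\eqref{lb2}, except that the paper tests the half-line $[\mu_2,\infty)$ rather than $[0,\infty)$: since $\p{N(\mu_2,\sigma_2^2)\geq \mu_2}=1/2$ exactly, that choice makes the bound $\tv{\ncal(\mu_1,\sigma_1^2)}{\ncal(\mu_2,\sigma_2^2)}\geq \p{N(\mu_1,\sigma_1^2)\in[\mu_1,\mu_2]}$ independent of $\sigma_2$, sparing you the case analysis in $s$ that your evaluation at $t=0$ forces.

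The genuine gap is in the variance lower bound $\tv{\ncal(0,1)}{\ncal(m,s^2)}\geq\frac{1}{200}\min\{1,|1-s^2|\}$, and it is not mere bookkeeping. Write $\eps\coloneqq\min\{1,|1-s^2|\}$ and let $\Phi,\phi$ be the standard normal cdf and density. Your reverse triangle inequality subtracts $\tv{\ncal(0,s^2)}{\ncal(m,s^2)}=2\Phi\left(\tfrac{m}{2s}\right)-1$ from the Theorem~\ref{thm:meanzero} bound $\tfrac{\eps}{100}$, so it yields at least $\tfrac{\eps}{200}$ only when $2\Phi\left(\tfrac{m}{2s}\right)-1\leq\tfrac{\eps}{200}$; your fallback for larger $m$, the half-line test, yields at least $\tfrac{\eps}{200}$ only when $\Phi\left(\tfrac{m}{s}\right)-\tfrac12\geq\tfrac{\eps}{200}$. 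But $2\Phi(x/2)-1>\Phi(x)-\tfrac12$ for every $x>0$ (both vanish at $x=0$ and $\phi(x/2)>\phi(x)$), i.e.\ the quantity you must subtract always strictly exceeds the quantity the half-line test recovers. Hence, letting $m_1<m_2$ solve $2\Phi(m_1/2s)-1=\tfrac{\eps}{200}$ and $\Phi(m_2/s)-\tfrac12=\tfrac{\eps}{200}$, for every $m\in(m_1,m_2)$ \emph{both} prongs give strictly less than $\tfrac{\eps}{200}$; this bad window is nonempty for every $s\neq1$ and lies in the regime ($s$ near $1$, $m$ tiny) that none of your ``extreme regime'' arguments covers. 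No choice of thresholds repairs this: your two estimates jointly certify only about $\tfrac{\eps}{300}$ in the worst case (using $2\Phi(x/2)-1\leq 2\left(\Phi(x)-\tfrac12\right)$), so the method proves the theorem with a degraded constant, not the stated $1/200$. The paper avoids subtraction entirely with inequality~\eqref{changemean}: the optimal test set for $\ncal(0,\sigma_1^2)$ versus $\ncal(0,\sigma_2^2)$ is the complement of a symmetric interval, so by symmetry each tail carries exactly half the total variation, and shifting the means (ordered so that stochastic monotonicity points the right way) can only help the retained tail; this gives $\tvn{\mu_1}{\sigma_1^2}{\mu_2}{\sigma_2^2}\geq\tfrac12\tvn{0}{\sigma_1^2}{0}{\sigma_2^2}$ uniformly in the mean gap, turning Theorem~\ref{thm:meanzero}'s $1/100$ into exactly $1/200$. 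That symmetry-plus-monotonicity idea is what is missing from your argument.
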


Although the total variation distance is symmetric, our lower and upper bounds are not symmetric, so they can be automatically strengthened; for instance, the following symmetric version of Theorem~\ref{thm:onedimensional} holds:
		\begin{align*}
\frac{1}{200} \min \left\{1, \max \left\{ \frac{|\sigma_1^2-\sigma_2^2|}{\min\{\sigma_1,\sigma_2\}^2} , \frac{40 |\mu_1-\mu_2|}{\min\{\sigma_1,\sigma_2\}} \right\} \right\}
& \leq
\tv{\ncal(\mu_1,\sigma_1^2)}{\ncal(\mu_2,\sigma_2^2)}
\\& \leq \frac{3|\sigma_1^2-\sigma_2^2|}{2\max\{\sigma_1,\sigma_2\}^2}+ \frac{|\mu_1-\mu_2|}{2\max\{\sigma_1,\sigma_2\}} .
\end{align*}
Moreover, for Theorem~\ref{thm:meanzero}, swapping $\Sigma_1$ and $\Sigma_2$ can change the estimation of the total variation distance by at most a multiplicative factor of 2. Namely,
suppose $\Sigma_1$ and $\Sigma_2$ are positive definite $d\times d$ matrices, $\lambda_1,\dots,\lambda_d$ are the eigenvalues of $\Sigma_1^{-1}\Sigma_2-I_d$,
and
$\nu_1,\dots,\nu_d$ are the eigenvalues of $\Sigma_2^{-1}\Sigma_1-I_d$.
Then elementary calculations give
\[
\frac 1 2 \leq 
\frac
{\min \left\{1, \sqrt{\sum \lambda_i^2} \right\}}
{\min \left\{1, \sqrt{\sum \nu_i^2} \right\}}
\leq 2.
\]
	
	Some preliminaries and other known bounds for the total variation distance between Gaussians appear in Section~\ref{sec:prelim}.
	We start by proving
	Theorem~\ref{thm:meanzero}
	in
	Section~\ref{sec:thm:meanzero},
	then
	we prove
	Theorem~\ref{thm:onedimensional}
	in
	Section~\ref{sec:thn:onedimensional}, and finally we prove Theorem~\ref{thm:main} 
	in Section~\ref{sec:thm:main}.


	\section{Preliminaries}
	\label{sec:prelim}
\paragraph{Matrix definitions.}
	The $d$-dimensional identity matrix is denoted $I_d$. The trace and determinant of a matrix $A$ are denoted $\tr(A)$ and $\det(A)$, respectively. The \emph{Frobenius norm} (also called the Hilbert–Schmidt norm or the Schur norm) of a matrix $A$ is denoted by $\|A\|_F \coloneqq \sqrt{\tr(AA\transpose)}$. Note that $\|A\|_F^2$ equals the sum of squares of entries of $A$. If $A$ is symmetric, $\|A\|_F^2$ equals the sum of squares of eigenvalues of $A$.
	For general background on matrix norms, see~\cite[Chapter~5]{Horn}.

	\paragraph{The coupling characterization of the total variation distance.}
	For two distributions $P$ and $Q$,
	a pair $(X,Y)$ of random variables defined on the same probability space is called a \emph{coupling} for $P$ and $Q$ if $X \sim P$ and $Y\sim Q$.
	An extremely useful property of the total variation distance is {\em the coupling characterization}: for any two distributions $P$ and $Q$,
	we have $\tv{P}{Q} \leq t$ if and only if there exists a coupling $(X,Y)$ for them such that $\p{X\neq Y} \leq t$
	(see, e.g., \cite[Proposition~4.7]{Levin}).
	This characterization implies that for any function $f$ we have 
	$\tv{f(X)}{f(Y)} \leq \tv{X}{Y}$.
	If $f$ is invertible (for instance if $f(v)=Av+b$ where $A$ is full-rank) this also implies 
	$\tv{f(X)}{f(Y)} = \tv{X}{Y}$.
	
	An important property of the Gaussian distribution is that any linear transformation of a Gaussian random variable is also Gaussian: if $X\sim \ncal(\mu,\Sigma)$ then
	$$AX + b \sim \ncal (A\mu +b, A\Sigma A + A\mu b\transpose + b\mu\transpose A\transpose + b b\transpose).$$
	
	For a positive semi-definite matrix $\Sigma$ with eigendecomposition $\Sigma = \sum_{i = 1}^d \lambda_i v_i v_i\transpose$ where the $v_i$ are orthonormal, we define 
	$\Sigma^{1/2} \coloneqq \sum_{i = 1}^d \sqrt{\lambda_i} v_i v_i\transpose$ and $\Sigma^{-1/2} \coloneqq \sum_{i = 1}^d  v_i v_i\transpose/\sqrt{\lambda_i}$.
	It is easy to observe that if $g\sim \ncal(0,I)$ then $\Sigma^{1/2} g \sim \ncal(0,\Sigma)$.
	
	We will use the inequality
		\[
		0 \leq x-\log(1+x) \leq x^2
		\qquad \forall x \geq -2/3
		\]
	throughout, which implies that for any $x\geq -2/3$ there exists a $b\in[0,1]$ such that $x-\log(1+x)=bx^2$.
	
	We next state some known bounds for the total variation distance between two Gaussians,
	which may be more convenient than the above bounds for some applications. 

	For the case when the two Gaussians have the same covariance matrix, \cite[Theorem~1]{ulyanov} gives
\[
\tvn{\mu_1}{\Sigma}{\mu_2}{\Sigma}
= \p{ N(0,1) \in \left[
-\frac{ \sqrt{(\mu_1-\mu_2)\transpose \Sigma^{-1} (\mu_1-\mu_2)} }{2},
\frac{\sqrt{(\mu_1-\mu_2)\transpose \Sigma^{-1} (\mu_1-\mu_2)}}{2}
\right]}.
\]

The following bounds follow from known relations between statistical distances.
	
	\paragraph{An upper bound for the total variation distance using the KL-divergence.}
	For distributions $P$ and $Q$ over $\R^d$ with densities $p$ and $q$, their Kullback–Leibler divergence (KL-divergence) is defined as 
	\begin{align*}
		\kl{P}{Q} \coloneqq \int_{\R^d} p(x) \log\left( \frac{p(x)}{q(x)} \right) dx,
	\end{align*}
	and Pinsker's inequality~\cite[Lemma~2.5]{Tsybakov} states that $\tv{P}{Q}\leq \sqrt{\kl{P}{Q}/2}$ for any pair of distributions.
	The KL-divergence between two Gaussians has a closed form (e.g., \cite[Formula~(A.23)]{Rasmussen}):
	\[
	\operatorname{KL}(\ncal(\mu_1,\Sigma_1)\parallel{\ncal(\mu_2,\Sigma_2)})
	~=~
	\frac{1}{2} \left( \tr(\Sigma_1^{-1}\Sigma_2 - I)
	+ (\mu_1-\mu_2) \transpose \Sigma_1^{-1} (\mu_1-\mu_2)
	- \log \det (\Sigma_2 \Sigma_1^{-1}) \right).
	\]
	Combining these gives the following proposition.
	\begin{proposition}
		\label{upperkl}
		If $\Sigma_1$ and $\Sigma_2$ are positive definite, then
		\[\tv{\ncal(\mu_1,\Sigma_1)}{\ncal(\mu_2,\Sigma_2)} \leq \frac 1 2 \sqrt { \tr(\Sigma_1^{-1}\Sigma_2 - I)
			+ (\mu_1-\mu_2) \transpose \Sigma_1^{-1} (\mu_1-\mu_2)
			- \log \det (\Sigma_2 \Sigma_1^{-1}) }.\]	
	\end{proposition}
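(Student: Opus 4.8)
The plan is to combine the two facts recalled immediately before the proposition's statement: Pinsker's inequality and the closed-form expression for the Kullback–Leibler divergence between two Gaussians. Since $\Sigma_1$ and $\Sigma_2$ are positive definite, both $\ncal(\mu_1,\Sigma_1)$ and $\ncal(\mu_2,\Sigma_2)$ admit full-dimensional densities on $\R^d$, the inverse $\Sigma_1^{-1}$ exists, and $\det(\Sigma_2\Sigma_1^{-1})>0$; hence every term appearing in the KL formula is well-defined, and the divergence is finite.

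First I would invoke Pinsker's inequality in the stated form $\tv{P}{Q} \leq \sqrt{\kl{P}{Q}/2}$ with $P = \ncal(\mu_1,\Sigma_1)$ and $Q = \ncal(\mu_2,\Sigma_2)$. Next I would substitute the closed-form Gaussian KL-divergence, which equals $\tfrac12$ times the bracketed expression $\tr(\Sigma_1^{-1}\Sigma_2 - I) + (\mu_1-\mu_2)\transpose \Sigma_1^{-1}(\mu_1-\mu_2) - \log\det(\Sigma_2\Sigma_1^{-1})$. Carrying out the arithmetic, the two factors of $\tfrac12$—one from Pinsker's bound and one from the KL formula—combine under the square root into a single factor of $\tfrac14$, which pulls out as the claimed $\tfrac12$ in front of the radical. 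This yields the stated inequality at once.

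There is no genuine obstacle here: the proposition is an immediate consequence of the two results quoted just above it, and the only point requiring any care is the bookkeeping of the multiplicative constants. One might additionally remark that the radicand is automatically nonnegative, since it equals $2\kl{P}{Q}$ and the KL-divergence is nonnegative; thus the right-hand side is always a well-defined real number, and the bound is vacuous (exceeding $1$) precisely when the KL-divergence is large.
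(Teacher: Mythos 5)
Your proof is correct and is exactly the paper's own argument: the paper derives Proposition~\ref{upperkl} by ``combining'' Pinsker's inequality $\tv{P}{Q}\leq\sqrt{\kl{P}{Q}/2}$ with the stated closed form for the Gaussian KL-divergence, just as you do. The constant bookkeeping ($\tfrac12$ from Pinsker and $\tfrac12$ from the KL formula merging into the $\tfrac12$ outside the radical) is the only computation involved, and you carry it out correctly.
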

	
	\paragraph{Bounds for the total variation distance using the Hellinger distance.}
	For distributions $P$ and $Q$ over $\R^d$ with densities $p$ and $q$, their Hellinger distance is defined as 
	\begin{align*}
		\operatorname{H}({P},{Q}) \coloneqq \frac{1}{\sqrt2} \sqrt{ \int_{\R^d}  \left(\sqrt {p(x)}-\sqrt{q(x)}\right)^2 \, dx  } ,
	\end{align*}
	and it is known that $$\operatorname{H}({P},{Q})^2 \leq \tv{P}{Q} \leq 
	\operatorname{H}({P},{Q}) \sqrt{2-\operatorname{H}({P},{Q})^2}
	\leq\sqrt2 \operatorname{H}({P},{Q}),
	$$ 
	see~\cite[page~44]{cam2000asymptotics}. 
	The Hellinger distance between two Gaussians has a closed form (e.g., \cite[Exercises~11 and~14 in Chapter~1]{Pardo}):
	\[
	\operatorname{H}(\ncal(\mu_1,\Sigma_1), {\ncal(\mu_2,\Sigma_2)})^2
	~=~
	1 - \frac{\det(\Sigma_1)^{1/4}\det(\Sigma_2)^{1/4}}{\det\left(\frac{\Sigma_1+\Sigma_2}{2}\right)^{1/2}} \exp \left\{ -\frac18 (\mu_1-\mu_2)\transpose \left(\frac{\Sigma_1+\Sigma_2}{2}\right)^{-1}(\mu_1-\mu_2) \right\}.
	\]
	Combining these gives the following proposition.
	\begin{proposition}\label{upperhellinger} 
		Assume that $\Sigma_1,\Sigma_2$ are positive definite, and let $$h=h(\mu_1,\Sigma_1,\mu_2,\Sigma_2) \coloneqq \left(1 - \frac{\det(\Sigma_1)^{1/4}\det(\Sigma_2)^{1/4}}{\det\left(\frac{\Sigma_1+\Sigma_2}{2}\right)^{1/2}} \exp \left\{ -\frac18 (\mu_1-\mu_2)\transpose \left(\frac{\Sigma_1+\Sigma_2}{2}\right)^{-1}(\mu_1-\mu_2) \right\}\right)^{1/2}.$$
		Then, we have
		\[h^2 \leq \tv{\ncal(\mu_1,\Sigma_1)}{\ncal(\mu_2,\Sigma_2)} \leq h \sqrt{2-h^2} \leq h\sqrt2.\]
	\end{proposition}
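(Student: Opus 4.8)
The plan is to obtain the proposition by direct substitution, since both ingredients are already recorded above: the general chain of inequalities relating the Hellinger and total variation distances, and the closed-form expression for the Hellinger distance between two Gaussians. There is essentially no argument to construct beyond matching these two formulas.

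First I would observe that the quantity $h$ defined in the statement is precisely the Hellinger distance between the two Gaussians. Comparing the definition of $h$ with the stated closed form for $\operatorname{H}(\ncal(\mu_1,\Sigma_1), \ncal(\mu_2,\Sigma_2))^2$, we see that $h^2$ equals this squared Hellinger distance term by term, so $h = \operatorname{H}(\ncal(\mu_1,\Sigma_1), \ncal(\mu_2,\Sigma_2))$. The positive definiteness hypothesis on $\Sigma_1$ and $\Sigma_2$ guarantees that $\tfrac{1}{2}(\Sigma_1 + \Sigma_2)$ is positive definite, hence all determinants appearing are positive and the closed form is well-defined. Moreover, since a squared Hellinger distance is always nonnegative and at most $1$, the expression under the outer square root in the definition of $h$ is nonnegative and $h$ is a well-defined real number in $[0,1]$; in particular $2 - h^2 \geq 0$, so the middle bound makes sense.

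Then I would apply the general inequalities $\operatorname{H}(P,Q)^2 \leq \tv{P}{Q} \leq \operatorname{H}(P,Q)\sqrt{2 - \operatorname{H}(P,Q)^2} \leq \sqrt{2}\,\operatorname{H}(P,Q)$ with $P = \ncal(\mu_1,\Sigma_1)$ and $Q = \ncal(\mu_2,\Sigma_2)$, and substitute $h$ for $\operatorname{H}(P,Q)$. This yields $h^2 \leq \tv{\ncal(\mu_1,\Sigma_1)}{\ncal(\mu_2,\Sigma_2)} \leq h\sqrt{2 - h^2} \leq h\sqrt{2}$, which is exactly the claimed chain.

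Since every step is a direct appeal to a formula or inequality already stated, there is no genuine obstacle here; the only point requiring care is the bookkeeping verification that the closed form for the squared Hellinger distance matches the definition of $h^2$, together with the well-definedness checks noted above. This makes the proof a one-line consequence of the preliminaries.
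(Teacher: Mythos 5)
Your proposal is correct and matches the paper's own argument exactly: the paper also obtains this proposition by combining the stated Hellinger--total-variation inequalities with the closed-form expression for the Hellinger distance between Gaussians (the paper simply writes ``Combining these gives the following proposition''). Your additional well-definedness checks (positivity of the determinants and $h \in [0,1]$) are fine but not something the paper spells out.
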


	\section{The same-mean case: proof of Theorem~\ref{thm:meanzero}}
	\label{sec:thm:meanzero}
	In this section we consider the case when both Gaussians have the same mean.
	For proving the theorem we will need two lemmas.
	
	\begin{lemma}\label{lem:zeromeancore}
		Suppose $\lambda_1,\dots,\lambda_d \geq -2/3$ and let
		$\rho \coloneqq \sqrt{\sum_{i=1}^{d} \lambda_i^2}$. 
		If $C$ is a diagonal matrix with diagonal entries $1+\lambda_1,\dots,1+\lambda_d$, then
		\(
		\tvn{0}{C^{-1}}{0}{I_d} 
		\geq
		\rho/6 - \rho^2/8 - (e^{\rho^2}-1)/2.
		\)
	\end{lemma}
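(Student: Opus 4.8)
The plan is to reduce the statement to a one-dimensional Khintchine-type lower bound on the fluctuations of the log-likelihood ratio. Since $C^{-1}$ and $I_d$ are diagonal, under $Q := \ncal(0,I_d)$ the coordinates $x_1,\dots,x_d$ are i.i.d.\ standard normals, and writing $P := \ncal(0,C^{-1})$ the log-likelihood ratio is
\[
L := \log\frac{dP}{dQ} = \frac12\sum_{i=1}^d\log(1+\lambda_i) - \frac12\sum_{i=1}^d\lambda_i x_i^2 .
\]
I would start from the identity $\tv{P}{Q} = \tfrac12\E_Q\!\left[\,|e^L-1|\,\right]$ and isolate the fluctuation term $\widetilde L := L - \E_Q[L] = -\tfrac12\sum_i\lambda_i(x_i^2-1)$. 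Two elementary facts, both consequences of $\E_Q[e^L]=1$ and the inequality $0\le x-\log(1+x)\le x^2$ (applicable to each $\lambda_i$ since $\lambda_i\ge-2/3$), drive the argument: the mean $\mu := \E_Q[L]=\tfrac12\sum_i(\log(1+\lambda_i)-\lambda_i)$ lies in $[-\rho^2/2,0]$, and the exponential moments of $\widetilde L$ are finite and close to $1$, for instance $\E_Q[e^{\widetilde L}]=e^{-\mu}\le e^{\rho^2/2}$. A direct computation also gives $\E_Q[\widetilde L^2]=\rho^2/2$.

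The conceptual point is that the total variation distance here is of exact order $\rho$, whereas the Kullback--Leibler, squared Hellinger, and $\chi^2$ divergences between $P$ and $Q$ are all of order $\rho^2$; hence none of the divergence-based bounds of Section~\ref{sec:prelim} can yield the lower bound, and the linear behavior must be extracted from the fact that $\widetilde L$ has standard deviation $\rho/\sqrt2$, of exact order $\rho$. To make this precise I would bound $\tv{P}{Q}$ from below by $\E_Q|\widetilde L|$: the pointwise inequality $e^L-1\ge L$ together with $\E_Q[e^L-1-L]=-\mu$ lets me replace $\E_Q|e^L-1|$ by $\E_Q|L|$, and the triangle inequality $|L|\ge|\widetilde L|-|\mu|$ then reduces the whole estimate to a lower bound on $\E_Q|\widetilde L|$, the discarded terms being $O(\rho^2)$ and absorbed into the lower-order terms of the claimed bound.

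The crux is therefore a Khintchine-type inequality: for the centered weighted sum $\widetilde L=-\tfrac12\sum_i\lambda_i(x_i^2-1)$ of independent $(\chi^2_1-1)$ variables, I must show $\E_Q|\widetilde L|\ge\rho/3$, i.e.\ that the first absolute moment is a definite fraction of the standard deviation $\rho/\sqrt2$. I would prove this by the truncated second moment method, $\E_Q|\widetilde L|\ge M^{-1}\E_Q[\widetilde L^2\,\mathbf 1\{|\widetilde L|\le M\}] = M^{-1}\big(\rho^2/2 - \E_Q[\widetilde L^2\,\mathbf 1\{|\widetilde L|>M\}]\big)$, choosing the truncation level $M$ proportional to $\rho$ and controlling the tail contribution by an exponential (Markov) estimate; this is exactly where the second exponential moment $\E_Q[e^{2\widetilde L}]$, which equals $\prod_i e^{\lambda_i}(1+2\lambda_i)^{-1/2}$ and is $\approx e^{\rho^2}$, enters and produces the $(e^{\rho^2}-1)/2$ term in the final bound.

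The main obstacle is the constant in this last step. The second- and fourth-moment methods alone (Cauchy--Schwarz or Paley--Zygmund) only give $\E_Q|\widetilde L|\gtrsim\rho/\sqrt{30}$, which after the factor $\tfrac12$ falls short of $\rho/6$; to reach the required constant one must effectively use all moments, i.e.\ the full exponential moment, so as to approach the Gaussian ratio $\E|Z|/\sqrt{\E Z^2}=\sqrt{2/\pi}$ that governs the central-limit regime. Finiteness of the exponential moments used in the tail bound is guaranteed because the statement is vacuous (the right-hand side is non-positive) unless $\rho\lesssim1/4$, in which case every $|\lambda_i|\le\rho<1/2$ and the moment generating functions appearing are finite; this case split is where the hypothesis $\lambda_i\ge-2/3$ and the effective restriction on $\rho$ are reconciled.
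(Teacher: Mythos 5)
Your reduction is sound, and it is essentially the paper's own skeleton: your $\widetilde L$ is exactly the paper's centered variable $X=\sum_{i}\lambda_i(1-g_i^2)/2$, and, like the paper, after discarding error terms of order $\rho^2$ you must lower-bound $\E|\widetilde L|$ by a constant multiple of $\rho$. (Your handling of the error terms, via $\E[e^L-1-L]=-\E[L]\le\rho^2/2$ and the triangle inequality, is actually cleaner than the paper's route through $|1-e^x|\ge|x|-x^2/2$ and the factor $e^{-b\rho^2}$, and the absorption works since $e^{\rho^2}-1\ge\rho^2$.) The crux, however, is the step you flag but never prove: the Khintchine-type inequality $\E|\widetilde L|\ge\rho/3$. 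The paper closes this step with H\"older's inequality $\E|X|\ge(\E X^2)^{3/2}/(\E X^4)^{1/2}$, using $\E X^2=\rho^2/2$ and $\E X^4\le 15\rho^4/4$; as you correctly note, this gives only $\rho/\sqrt{30}$, and your computation in fact exposes a slip in the paper's final display, which asserts that this quantity is at least $\rho/3$ (false, since $\sqrt{30}>3$; the fourth-moment route honestly yields the lemma with $\rho/(2\sqrt{30})$ in place of $\rho/6$).

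Identifying this obstacle is not the same as overcoming it, and the replacement you propose cannot reach the constant. The truncated second-moment bound is capped by $\E[\widetilde L^2\,\mathbf{1}\{|\widetilde L|\le M\}]/M\le\rho^2/(2M)$, so reaching $\rho/3$ forces $M\le 3\rho/2$, about two standard deviations of $\widetilde L$; at that level the discarded tail carries a constant fraction of the second moment, not a lower-order correction. Already for $d=1$, where $\widetilde L=\lambda(1-x_1^2)/2$, $\rho=|\lambda|$, and (writing $\phi$ for the standard normal density) $\E|\widetilde L|=2\phi(1)\rho\approx 0.48\rho$, exact computation gives $\E[\widetilde L^2\,\mathbf{1}\{|\widetilde L|>M\}]\approx 0.35\rho^2$ for $M=\rho$ and $\approx 0.29\rho^2$ for $M=3\rho/2$; optimizing over $M$, the method yields at best about $0.22\rho$, or about $0.30\rho$ if one also retains the term $M\,\p{|\widetilde L|>M}$, in both cases short of $\rho/3$, even though the target inequality is true there. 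Replacing these exact tail evaluations by exponential Markov bounds, as you propose, loses strictly more: the deficit is created at the truncation step itself, not in how the tail is estimated. Finally, the intuition that using ``all moments'' lets one approach the Gaussian ratio $\sqrt{2/\pi}$ is not available: for $d=1$ the relevant ratio $\E|\widetilde L|/\sqrt{\E\widetilde L^2}$ equals $2\sqrt{2}\,\phi(1)\approx 0.68$, not $\sqrt{2/\pi}\approx 0.80$, so the sum need not be near Gaussian at all; what is required is a worst-case Khintchine constant over all weight vectors for centered $\chi_1^2$ summands, and your proposal contains no proof of any such bound. Without it, the lemma with its stated constants remains unproven.
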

	
	\begin{proof} Define a random vector $g=(g_1,\dots,g_d)\sim\ncal(0,I_d)$. From~\eqref{tvl1} we have
		\begin{align*}
			2 \tv{\ncal(0,C^{-1})}{\ncal(0,I)} &= (2\pi)^{-d/2}\int_{\R^d} \left| e^{-x\transpose x/2} - \sqrt{\det(C)} e^{-x\transpose C x/2} \right| \, dx \\
			& = (2\pi)^{-d/2}\int_{\R^d} e^{-x\transpose x/2} \left| 1 - \sqrt{\det(C)} e^{-x\transpose (C-I_d) x/2} \right| \, dx \\
			& = \E \left| 1 - \sqrt{\det(C)} e^{-g\transpose (C-I_d) g/2} \right| \\
			& = \E \left| 1 - \exp \left(
			\sum_{i=1}^{d} \log(1+\lambda_i)/2 - \lambda_i g_i^2/2
			\right) \right|.
		\end{align*}
		Since $\lambda_i\geq -2/3$ for all $i$, we have
		$\log (1+\lambda_i)/2
		= \lambda_i/2 - b_i \lambda_i^2/2$
		for some $b_i\in[0,1]$, 
		and summing these up we find
		$\sum_{i = 1}^d \log (1+\lambda_i)/2
		= \sum_{i = 1}^d \lambda_i/2 - b \rho^2$
		for some $b\in[0,1]$.
		Also let $h_i = 1 - g_i^2$
		and $X = \sum_{i = 1}^d \lambda_i h_i/2$, whence
		\begin{align}
			2 \tv{\ncal(0,C^{-1})}{\ncal(0,I_d)} &=\E \left| 1 - e^{-b\rho^2} e^{X}  \right|
			\notag\\&\geq \E \left| 1 - e^X \right| - \E \left| e^X - e^{-b\rho^2} e^{X}  \right| \notag\\
			& \geq 
			\E \left| X \right| - \E X^2/2 - (1-e^{-b\rho^2})\E e^{X} \notag\\ &
			\geq 
			\frac {(\E X^2)^{3/2}}{(\E X^4)^{1/2}} - \E X^2/2 - (1-e^{-b\rho^2})\E e^{X}
	\label{3piece}	\end{align}
		where the first inequality is the triangle inequality, the second one follows from 
		\[
		|1 - e^x| \geq |x| - x^2/2 \qquad \forall x \in \R,
		\]
		and the third one follows from H\"older's inequality (see, e.g.,  \cite[Lemma~14.8]{florescu2013handbook}).
		We control each term on the right-hand-side of~\eqref{3piece}. First, observe that since $h_i$ is mean-zero, we have $\E h_i h_j=0$ for all $i\neq j$, and since $\E h_i^2 = 2$,
		\[
		\E X^2 = \E \left(\sum_{i = 1}^d \lambda_i h_i/2\right)^2
		= \sum_{i = 1}^d (\lambda_i/2)^2 \E h_i^2
		= \sum_{i = 1}^d \lambda_i^2/2 = \rho^2/2. 
		\]
		Second, since $\E g_i^2 = 1,\E g_i^4 = 3,\E g_i^6 =15$, and $\E g_i^8 =105,$
		we have $\E h_i^4 = 60$; thus,
		\begin{align*}
			\E X^4 & = \E \left(\sum_{i = 1}^d \lambda_i h_i/2\right)^4 \\
			&= \sum_{i = 1}^d (\lambda_i/2)^4 \E h_i^4
			+ 3 \sum_{i\neq j} (\lambda_i/2)^2 (\lambda_j/2)^2 \E h_i^2\E h_j^2
			\\& = 60 \sum_{i = 1}^d (\lambda_i/2)^4 
			+ 12  \sum_{i\neq j} (\lambda_i/2)^2 (\lambda_j/2)^2 \\
			& \leq 60 \left(\sum_{i = 1}^d (\lambda_i/2)^2 \right)^2 = 15 \rho^4/4.
		\end{align*}
		Finally, for the exponential moment,  note that 
		$\E \exp(tg_i^2) = (1-2t)^{-1/2}$ for any $t<1/2$, hence
		\begin{align*}
			\E e^{X} = 
			\prod_{i = 1}^d \left(e^{\lambda_i/2}
			\E e^{-\lambda_i g_i^2 /2}\right)
			= \prod_{i = 1}^d \left(e^{\lambda_i/2}
			e^{\frac{-1}{2} \log (1+\lambda_i) }\right)
			=\exp\left( \sum_{i = 1}^d 
			\lambda_i/2
			-\log (1+\lambda_i)/2
			\right) = e^{b\rho^2},
		\end{align*} 
		consequently,
		\begin{align*}
			2 \tv{\ncal(0,C^{-1})}{\ncal(0,I_d)} 
			\geq
			\frac{(\rho^2/2)^{3/2}}{(15\rho^4/4)^{1/2}} - \rho^2/4
			- e^{b\rho^2}+1 \geq \rho/3 - \rho^2/4 - (e^{\rho^2}-1),
		\end{align*}
		completing the proof.
	\end{proof}
	
	\begin{lemma}\label{lem:onedimconstant}
		If $\lambda^2\geq 0.01$ then 
		$\tvn{0}{1}{0}{1+\lambda}>0.01$.
	\end{lemma}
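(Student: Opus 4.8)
The plan is to bound the total variation distance from below by testing against a single, explicitly chosen set, using the definition $\tv{P}{Q}=\sup_A|P(A)-Q(A)|$. I would first note why the softer tools are inadequate here: the Hellinger lower bound $h^2\le\TV$ from Proposition~\ref{upperhellinger} specializes, for centered one-dimensional Gaussians with variances $1$ and $1+\lambda$, to $h^2 = 1-(1+\lambda)^{1/4}(1+\lambda/2)^{-1/2}$, which at $\lambda=0.1$ is only about $0.0006$ — far below the required $0.01$. So a clean plug-in inequality will not do, and a direct test-set argument is needed. Since both Gaussians are centered, the natural candidate is a symmetric interval about the origin, and I would take $A=[-1,1]$.

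The second step is the lower bound itself. Recording that $X\sim\ncal(0,\sigma^2)$ satisfies $\p{|X|\le 1}=2\Phi(1/\sigma)-1$, where $\Phi$ is the standard normal distribution function, and applying this with $\sigma^2=1$ and $\sigma^2=1+\lambda$, I obtain
\[
\tvn{0}{1}{0}{1+\lambda}\ \ge\ \big|\p{|N(0,1)|\le 1}-\p{|N(0,1+\lambda)|\le 1}\big|\ =\ 2\Big|\Phi(1)-\Phi\big(1/\sqrt{1+\lambda}\big)\Big|,
\]
and I would call the right-hand side $g(\lambda)$.

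The third step is to reduce the range $\lambda^2\ge 0.01$, i.e.\ $\lambda\in(-1,-0.1]\cup[0.1,\infty)$, to the two boundary values by monotonicity of $g$. The point is that $g$ is a composition of monotone maps: for $\lambda>0$ the argument $1/\sqrt{1+\lambda}$ lies below $1$ and decreases in $\lambda$, so $g$ is increasing on $[0.1,\infty)$; for $-1<\lambda<0$ the argument exceeds $1$ and increases as $\lambda$ decreases, so $g$ increases as $\lambda\to-1$. Hence $\inf_{\lambda^2\ge 0.01}g(\lambda)=\min\{g(0.1),\,g(-0.1)\}$, and it suffices to check these two endpoints.

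Finally I would evaluate $g(0.1)=2\big(\Phi(1)-\Phi(1/\sqrt{1.1})\big)$ and $g(-0.1)=2\big(\Phi(1/\sqrt{0.9})-\Phi(1)\big)$. Using $\Phi(1)\approx 0.8413$, $\Phi(1/\sqrt{1.1})=\Phi(0.9535)\approx 0.8298$, and $\Phi(1/\sqrt{0.9})=\Phi(1.0541)\approx 0.8541$ shows both values exceed $0.02$, comfortably above the threshold $0.01$. I do not expect a genuine obstacle: the only real content is the verification that one fixed interval works \emph{uniformly} over all admissible $\lambda$, which the monotonicity step handles, and that the endpoint numerics clear $0.01$ with room to spare, so the estimates for $\Phi$ need only modest precision.
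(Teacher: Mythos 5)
Your proposal is correct and is essentially the paper's own proof: both lower-bound the total variation distance via the single test set $[-1,1]$, reduce to the boundary variances $1+\lambda\in\{0.9,\,1.1\}$ by monotonicity of $\p{N(0,\sigma^2)\in[-1,1]}$ in $\sigma^2$, and finish with a numerical check of the standard normal distribution function, with both endpoint values clearing $0.01$ by a factor of about two. The only differences are cosmetic: your explicit use of $\Phi$ and the preliminary remark that the Hellinger bound is too weak, which the paper omits.
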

	\begin{proof}
		If $\lambda>0$ then $1+\lambda\geq1.1$, so we have
		\begin{align*}
			\tvn{0}{1}{0}{1+\lambda} &\geq
			\p{N(0,1)\in[-1,1]}-\p{N(0,1+\lambda)\in[-1,1]} \\ &\geq
			\p{N(0,1)\in[-1,1]}-\p{N(0,1.1)\in[-1,1]} \\ &> 0.68-0.66 > 0.01,  
		\end{align*}
		and if $\lambda<0$ then $1+\lambda\leq0.9$, so we have
		\begin{align*}
			\tvn{0}{1}{0}{1+\lambda} &\geq
			\p{N(0,1+\lambda)\in[-1,1]}-\p{N(0,1)\in[-1,1]} \\ &\geq
			\p{N(0,0.9)\in[-1,1]}-\p{N(0,1)\in[-1,1]} \\ &> 0.70-0.69 = 0.01. \qedhere
		\end{align*}
	\end{proof}
	
	We can now prove
	Theorem~\ref{thm:meanzero}.
	
	\begin{proof}[Proof of Theorem~\ref{thm:meanzero}]
		For both parts of the theorem, we may assume that $\mu=0$.
		We start with the case that $\Sigma_1$ and $\Sigma_2$ are positive definite, i.e., they have full rank.
		Recall that $\Sigma_1^{-1}\Sigma_2$ have eigenvalues $1+\lambda_1,\dots,1+\lambda_d$.
		Let $\rho \coloneqq  \sqrt{\sum_{i = 1}^d {\lambda_i^2}}$.
		
		We first prove the upper bound.
		If some $\lambda_i<-2/3$ then trivially
		\[
		\tv{\ncal(0,\Sigma_1)}{\ncal(0,\Sigma_2)} \leq 1 \leq \frac32 |\lambda_i| \leq \frac32 \sqrt{\sum_{i=1}^{d} \lambda_i^2}=3\rho/2.
		\]
		Otherwise, by Proposition~\ref{upperkl},
		\[
		4\tv{\ncal(0,\Sigma_1)}{\ncal(0,\Sigma_2)}^2
		\leq  \sum_{i=1}^{d} (\lambda_i - \log(1+\lambda_i)) \leq  \sum_{i=1}^{d} \lambda_i^2=\rho^2, 
		\]
and the upper bound in the theorem is proved.
		
		For proving the lower bound, 
		we first claim that if $C$ is a diagonal matrix with diagonal entries $1+\lambda_1,\dots,1+\lambda_d$, then
		\begin{equation}
			\tvn{0}{\Sigma_1}{0}{\Sigma_2}
			=
			\tvn{0}{C^{-1}}{0}{I_d}.
			\label{diagonalize}
		\end{equation}
		To prove this, let $g\sim \ncal(0,I_d)$.
		We first claim if $E$ and $F$ are positive definite matrices with the same spectrum, then $\TV(Eg, g) = \TV(Fg, g)$. 
		To see this, let $s_1, \dots, s_d$ be the eigenvalues of $E$ and $F$, and let $g_1, \dots, g_d$ be the components of $g$. 
		By rotational invariance of the standard Gaussian distribution (see, e.g., \cite[Proposition~3.3.2]{Vershynin}), both  $\TV(Eg, g)$ and $\TV(Fg, g)$ are equal to 
		$\TV ( (s_1  g_1, s_2  g_2, \dots, s_d  g_d),  (g_1, g_2, \dots, g_d) )$, and the claim is proved.
		This also implies,
		for any two positive definite matrices $E$ and $F$ with the same spectrum, \[\tvn{0}{I_d}{0}{E}=\tvn{0}{I_d}{0}{F}.\]		
		Next, we have 
		\begin{align*}
        \TV ( \ncal(0, \Sigma_1), \ncal(0, \Sigma_2)) &= 
		\TV (\Sigma_1^{1/2} g, \Sigma_2^{1/2} g) = 
		\TV (\Sigma_2^{-1/2} \Sigma_1^{1/2} g,g) \\
		&= 
		\TV ( \ncal(0, \Sigma_2^{-1/2} \Sigma_1 \Sigma_2^{-1/2}),\ncal(0,I_d)).
		\end{align*}
		Now $\Sigma_2^{-1/2} \Sigma_1 \Sigma_2^{-1/2}$ 
		has the same spectrum
		as $\Sigma_2^{-1}\Sigma_1$, which has the same spectrum as $C^{-1}$,
		whence \eqref{diagonalize} is proved.
		
		For proving the lower bound in the theorem we consider three cases.
		
		\noindent\textbf{Case 1: there exists some $i$ with $|\lambda_i|\geq0.1$. }
		Observe that if we project a random variable distributed as $\ncal(0,C^{-1})$ onto the $i$th component, we obtain a one-dimensional $\ncal(0,(1+\lambda_i)^{-1})$ random variable .
		Since projection can only decrease the total variation distance, using Lemma~\ref{lem:onedimconstant} we obtain
		\[
		\tvn{0}{C^{-1}}{0}{I_d}
		\geq
		\tvn{0}{(1+\lambda_i)^{-1}}{0}{1}
		=
		\tvn{0}{1}{0}{1+\lambda_i}
		\geq 0.01,
		\]
		as required.
		The  above equality holds because the total variation distance is invariant under any linear transformation.
		
		\noindent\textbf{Case 2: $|\lambda_i|<0.1$ for all $i$, and $\rho \leq 0.17$.}
		In this case
		Lemma~\ref{lem:zeromeancore} gives
		\[
		\tvn{0}{C^{-1}}{0}{I_d}
		\geq
		\rho/6 - \rho^2/8 - (e^{\rho^2}-1)/2
		\geq \rho / 100,
		\]
		as required.
		
		\noindent\textbf{Case 3: $|\lambda_i|<0.1$ for all $i$, and $\rho > 0.17$.}
		Define 
		\[
		f(\rho)\coloneqq \rho/6 - \rho^2/8 - (e^{\rho^2}-1)/2,
		\]
		and observe that $f(x)\geq0.01$ for $0.1\leq x \leq 0.17$.
		Let $1\leq j < d$ be the largest index such that
		$\sum_{i=1}^{j} \lambda_i^2 \leq 0.17^2$,
		and observe that 
		since $|\lambda_i|<0.1$ for all $i$, we have
		$\rho'^2\coloneqq \sum_{i=1}^{j} \lambda_i^2 \geq 0.17^2 - 0.1^2 >0.01$ and so
		$f(\rho') \geq 0.01$.
		Let $C'$ be the diagonal $j\times j$ matrix with diagonal entries $1+\lambda_1,\dots,1+\lambda_j$.
		If we project a random variable distributed as $\ncal(0,C^{-1})$ onto the first $j$ coordinates, we obtain a $ \ncal(0,C'^{-1})$ random variable.
		Since projection can only decrease the total variation distance, using Lemma~\ref{lem:zeromeancore} we obtain
		\[
		\tvn{0}{C^{-1}}{0}{I_d}
		\geq
		\tvn{0}{C'^{-1}}{0}{I_j}
		\geq f(\rho') \geq 0.01,
		\]
		as required.
		
		We finally consider the second part of the theorem, i.e., when $\Sigma_1$ and $\Sigma_2$ are positive semi-definite and $\range(\Sigma_1)=\range(\Sigma_2)$.
		Recall that $\Pi$ is a $d\times r$ matrix whose columns form a basis for $\range(\Sigma_1)$. 
		Then observe that 
		$v \mapsto \Pi\transpose v$ is an invertible map from $\range(\Sigma_1)$ to $\R^{r}$, with the inverse given by $w \mapsto \Pi (\Pi\transpose \Pi)^{-1} w$.
		This implies
		\[
		\tv{N(0,\Sigma_1)}{N(0,\Sigma_2)}
		=
		\tv{\Pi\transpose N(0,\Sigma_1)}{\Pi\transpose N(0,\Sigma_2)}
		=
		\tv{\ncal(0,\Pi\transpose \Sigma_1 \Pi)}
		{\ncal(0,\Pi\transpose \Sigma_2 \Pi)}.
		\]
		The matrices $\Pi\transpose \Sigma_1 \Pi$ and $\Pi\transpose \Sigma_2 \Pi$ are  positive definite
		$r \times r$ matrices, hence the second part of the theorem follows from the first part.
	\end{proof}
	
	\section{The one-dimensional case: proof of Theorem~\ref{thm:onedimensional}}
	\label{sec:thn:onedimensional}
	We start with the upper bound. If
	$\frac{|\sigma_1^2-\sigma_2^2|}{\sigma_1^2}\geq 2/3$, then the right-hand-side is at least 1 and the bound holds because the total variation distance is at most 1. Otherwise, since ${\sigma_2^2}/{\sigma_1^2}-1\geq -2/3$, 
	we have ${\sigma_2^2}/{\sigma_1^2}-1-\log({\sigma_2^2}/{\sigma_1^2})\leq ({\sigma_2^2}/{\sigma_1^2}-1)^2$, so
	from Proposition~\ref{upperkl} we have
	\begin{align*}
		\tv{\ncal(\mu_1,\sigma_1^2)}{\ncal(\mu_2,\sigma_2^2)} & \leq \frac12 \sqrt{{\sigma_2^2}/{\sigma_1^2}-1-\log({\sigma_2^2}/{\sigma_1^2})  + (\mu_1-\mu_2)^2/\sigma_1^2} \\&\leq \frac12
		\sqrt{{\sigma_2^2}/{\sigma_1^2}-1-\log({\sigma_2^2}/{\sigma_1^2})} + \frac12\sqrt{(\mu_1-\mu_2)^2/\sigma_1^2} \\&\leq \frac12 |{\sigma_2^2}/{\sigma_1^2}-1| + \frac 12 |(\mu_1-\mu_2)/\sigma_1|,
	\end{align*}
	completing the proof of the upper bound.
	
	The lower bound follows from the following two lower bounds:
	\begin{align}
		\frac{1}{200} \min \left\{1,  \frac{|\sigma_1^2-\sigma_2^2|}{\sigma_1^2}\right\}
		& \leq
		\tv{\ncal(\mu_1,\sigma_1^2)}{\ncal(\mu_2,\sigma_2^2)}, \label{lb1} \\
		\frac15 \min \left\{1,   \frac{|\mu_1-\mu_2|}{\sigma_1} \right\}
		& \leq
		\tv{\ncal(\mu_1,\sigma_1^2)}{\ncal(\mu_2,\sigma_2^2)}. \label{lb2}
	\end{align}
	We start with proving~\eqref{lb1}.
	We show
	\begin{equation}
		\label{changemean}
		\frac12 
		\tv{\ncal(0,\sigma_1^2)}{\ncal(0,\sigma_2^2)} \leq 
		\tv{\ncal(\mu_1,\sigma_1^2)}{\ncal(\mu_2,\sigma_2^2)},
	\end{equation}
	and then \eqref{lb1} follows from Theorem~\ref{thm:meanzero}.
	Assume, without loss of generality, that $\sigma_1\leq\sigma_2$
	and $\mu_1\leq \mu_2$.
	By the form of the density of the normal distribution, this implies there exists some $c=c(\sigma_1,\sigma_2)$ such that
	$$\tv{\ncal(0,\sigma_1^2)}{\ncal(0,\sigma_2^2)}
	= \p{N(0,\sigma_2^2) \notin [-c,c]}- \p{N(0,\sigma_1^2) \notin [-c,c]},$$
	and thus
	$$
	\p{N(0,\sigma_2^2) >c}
	= \p{N(0,\sigma_1^2) >c} + \tv{\ncal(0,\sigma_1^2)}{\ncal(0,\sigma_2^2)}/2.$$
	Therefore,
	\begin{align*}
		\p{N(\mu_2,\sigma_2^2) >c}
		& = \p{N(\mu_2,\sigma_1^2) >c} + \tv{\ncal(0,\sigma_1^2)}{\ncal(0,\sigma_2^2)}/2
		\\ & \geq
		\p{N(\mu_1,\sigma_1^2) >c} + \tv{\ncal(0,\sigma_1^2)}{\ncal(0,\sigma_2^2)}/2,
	\end{align*}
	and \eqref{changemean} is proved.
	
	To complete the proof of Theorem~\ref{thm:onedimensional} we need only prove \eqref{lb2}.
	By symmetry, we may assume $\mu_1\leq \mu_2$.
	Let $X \sim \ncal(\mu_1,\sigma_1^2)$.
	Then
	\begin{align*}
		\tvn{\mu_1}{\sigma_1^2}{\mu_2}{\sigma_2^2} &\geq \p{N(\mu_2,\sigma_2^2)\geq \mu_2}
		- \p{X\geq \mu_2}\\
		& = 1/2 - (1/2-\p{X\in[\mu_1,\mu_2]})
		\\& =\p{X\in[\mu_1,\mu_2]}.
	\end{align*}
	If $\mu_2-\mu_1 \geq \sigma_1$, then
	\[
	\p{X\in[\mu_1,\mu_2]}
	\geq
	\p{X\in[\mu_1,\mu_1+\sigma_1]}
	=
	\p{N(0,1)\in[0,1]} > \frac15,
	\]
	and if $\mu_2-\mu_1 < \sigma_1$ then
	\begin{align*}
		\p{X\in[\mu_1,\mu_2]}
		= \int_{\mu_1}^{\mu_2} \frac{e^{-(x-\mu_1)^2/2\sigma_1^2}}{\sqrt{2\pi}\sigma_1} dx \geq (\mu_2-\mu_1) \frac{e^{-(\mu_2-\mu_1)^2/2\sigma_1^2}}{\sqrt{2\pi}\sigma_1} > \frac{e^{-1/2}}{\sqrt{2\pi}} \frac{|\mu_1-\mu_2|}{\sigma_1} >   \frac{ |\mu_1-\mu_2|}{5\sigma_1},
	\end{align*}
	which proves~\eqref{lb2} and completes the proof of the theorem.

	\section{The general case: proof of Theorem~\ref{thm:main}}\label{sec:thm:main}
	Recall that $v=\mu_1-\mu_2$, and let $u\coloneqq (\mu_1+\mu_2)/2$. Any vector in $\R^d$ has a component in the direction of $v$ and a component orthogonal to $v$. In particular, any $w$ 
	can be written uniquely as \[w = u + f_1(w) v + f_2(w),\qquad f_2(w)\transpose v = 0,\]
	with $f_1$ and $f_2$ given by
	$$f_1(w) = \frac{(w-u)\transpose v}{v\transpose v}\in \R, \qquad f_2(w) = w-u-f_1(w)v = P(w-u),
	$$
	with $P\coloneqq I_d - vv\transpose/v\transpose v$.
	
	Let $X\sim \ncal(\mu_1,\Sigma_1)$
	and $Y\sim \ncal(\mu_2,\Sigma_2)$.
	Then we have, by the coupling characterization of the total variation distance,
	\begin{align*}
		\max \{ \tv{f_1(X)}{f_1(Y)}, \tv{f_2(X)}{f_2(Y)} \}
	    \leq \tv{X}{Y}.
	\end{align*}
		We next claim that $\displaystyle f_1(X)\sim \ncal\left(\frac 1 2 ,\frac{v\transpose \Sigma_1 v}{\red{(v\transpose v)^2}} \right)$. To see this, observe that $f_1(X)$ is a linear map of a Gaussian, so it is Gaussian. Its mean and covariance can be computed from those of $X$. Similarly, one can compute $\displaystyle f_1(Y)\sim \ncal\left(-\frac 1 2,\frac{v\transpose \Sigma_2 v}{\red{(v\transpose v)^2}}\right)$. So, Theorem~\ref{thm:onedimensional} gives
	\[
	\frac{1}{200} \min \left\{1, \max \left\{ \frac{|v\transpose \Sigma_1 v-v\transpose \Sigma_2 v|}{v\transpose \Sigma_1 v} , 40\red{\frac{{v\transpose v} }{{\sqrt{v\transpose \Sigma_1 v}}}} \right\} \right\}
	\leq
	\tv{f_1(X)}{f_1(Y)} .
	\]
	
	On the other hand, since  $f_2(w)=P(w-u)$, both $f_2(X)$ and $f_2(Y)$ are also Gaussians, with $f_2(X)\sim\ncal(0,P\Sigma_1P)$ and
	$f_2(Y)\sim\ncal(0,P\Sigma_2P)$. Note that 
	$\range(P\Sigma_1P)=\range(P\Sigma_2P)=\range(\Pi)$.
	Also observe that since each column of $\Pi$ is orthogonal to $v$, we have $\Pi\transpose P = \Pi$ and $P \Pi = \Pi$.
	Recall that  $\rho_1,\dots,\rho_{d-1}$ are the eigenvalues of $ (\Pi\transpose \Sigma_1 \Pi)^{-1}
	\Pi\transpose \Sigma_2 \Pi - I_{d-1}$.
	Hence the second part of Theorem~\ref{thm:meanzero} gives
	\[
	\frac{1}{100} \min \left\{1, \sqrt{\sum_{i=1}^{d-1}\rho_i^2}\right\}
	\leq
	\tv{f_2(X)}{f_2(Y)},
	\]
	completing the proof of Theorem~\ref{thm:main}.

\paragraph{Acknowledgments.} We are grateful to
Michael Kohler, Gautam Kamath, Cole Franks,
and Shirshendu Ganguly for pointing out inaccuracies in earlier versions of this paper.
		
	\bibliographystyle{plain}
	\bibliography{tvdistance}
	
\end{document}